\newtheorem{theorem}{Theorem}[section]
\newtheorem{lemma}[theorem]{Lemma}
\newtheorem{proposition}[theorem]{Proposition}
\theoremstyle{definition}
\theoremstyle{remark}
\numberwithin{equation}{section}
\def\R{{\mathbb R}}
\def\dim{{\rm dim}}
\def\N{{\mathbb N}}
\def\dist{{\rm dist}}
\def\:{{\colon}}
\begin{document}

\title{Some results in support of the Kakeya Conjecture}


\author{Jonathan M. Fraser}
\address{School of Mathematics and Statistics, The University of St Andrews, St Andrews,\\ KY16 9SS, UK.}
\ead{jmf32@st-andrews.ac.uk}

\author{Eric J. Olson}
\address{Department of
  Mathematics/084, University of Nevada, Reno, NV 89557, USA.}
\ead{ejolson@unr.edu}

\author{James C. Robinson}
\address{Mathematics Institute, University of Warwick, Coventry, CV4 7AL, UK.}
\ead{j.c.robinson@warwick.ac.uk}


\date{}


\begin{abstract}
A Besicovitch set is a subset of $\R^d$ that contains a unit line segment in every direction and the famous Kakeya conjecture states that Besicovitch sets should have full dimension.  We provide a number of results in support of this conjecture in a variety of contexts.  Our proofs are simple and aim to give an intuitive feel for the problem.  For example, we give a very simple proof that the packing and lower box-counting dimension of any Besicovitch set is at least $(d+1)/2$ (better estimates are available in the literature).  We also study the `generic validity' of the Kakeya conjecture in the setting of Baire Category and prove that typical Besicovitch sets have full upper box-counting dimension.

We also study a weaker version of the Kakeya problem where unit line segments are replaced by half-infinite lines.  We prove that such `half-extended Besicovitch sets' have full Assouad dimension.  This can be viewed as full resolution of a (much weakened) version of the Kakeya problem.

\emph{Mathematics Subject Classification} 2010:  28A80, 54E52, 28A75.

\emph{Key words and phrases}: Kakeya conjecture, box-counting dimension, Hausdorff dimension, Assouad dimension, Baire category.
\end{abstract}

\maketitle

\section{Introduction}

A Besicovitch set in $\R^d$ is a set that contains a unit line segment in every direction.  For convenience (and because most of our results concern box-counting dimension which is usually only defined for bounded sets) we will assume that our Besicovitch sets are bounded. In 1919 Besicovitch proved the surprising result that there are Besicovitch sets that have zero Lebesgue measure, see the later 1928 paper \cite{B}. Given this, it is natural to ask whether Besicovitch sets can be even smaller, i.e.\ whether there are Besicovitch sets with zero $s$-dimensional Hausdorff measure for some $s<d$. The Kakeya Conjecture is that this is not possible: equivalently, that the Hausdorff dimension, $\dim_{\rm H}$,  of any Besicovitch set in $\R^d$ is maximal, i.e.\ equal to $d$.

Davies \cite{D} showed that this conjecture is true when $d=2$. Wolff \cite{W} showed that $\dim_{\rm H}(K)\ge (d+2)/2$, and Katz \& Tao \cite{KT} showed that $\dim_{\rm H}(K)\ge (2-\sqrt2)(d-4)+3$. With the Hausdorff dimension replaced by the upper box-counting dimension, $\dim_{\rm B}$, the best bound in three dimensions is now $\dim_{\rm B}(K)\ge 5/2+\epsilon$; a hard-won improvement over the result of Wolff due to Katz, {\L}aba, \& Tao \cite{XKT}. Slightly better estimates for upper box-counting dimension are also available in higher dimensions, see \cite{KT2} for a comprehensive survey of the state of the art up to around 2002.

In this paper we provide several results in support of the Kakeya conjecture. One of the overarching purposes of the paper is to provide some intuition towards why the Kakeya conjecture should be true, especially to readers not familiar with the `state of the art'.  We present a very simple folklore proof that the lower box or packing dimension of any Besicovitch set is at least $(d+1)/2$. We also show that the box-counting dimension version of the Kakeya Conjecture is true `generically', in the following sense.  Let $\mathring S^{d-1}$ denote the unit sphere in $\R^d$ with antipodal points identified. We encode a representative family of `minimal'  Besicovitch sets in $\R^d$ by bounded maps $f\:\mathring S^{d-1}\to\R^d$, where $f(x)$ gives the centre of the unit line segment oriented in the $x$ direction. Note that any bounded Besicovitch set must contain one of the Besicovitch sets in our family.  Denoting by $B(\mathring S^{d-1})$ the collection of all such maps equipped with the supremum norm, we show that (i) for a dense set of $f$ the corresponding Besicovitch set has positive Lebesgue measure, in contrast to Besicovitch's existence result mentioned above, and (ii) the set of those $f$ for which the corresponding Besicovitch set has maximal upper box-counting (Minkowski) dimension $d$ is a residual subset of $B(\mathring S^{d-1})$.  The Baire Category Theorem has previously been used by K\"orner \cite{K} in the context of the Kakeya problem to prove the existence of zero measure Besicovitch sets.

Motivated by Keleti \cite{keleti} we also study the related question of `Besicovitch sets' which contain half-infinite (or doubly infinite) lines in every direction.  Monotonicity of all the standard notions of dimension mean that these sets have dimension at least as large as the classical Besicovitch sets, thus making the analogous conjecture easier. We solve this problem completely for the `biggest' (and therefore easiest) dimension: Assouad dimension.  This could perhaps be viewed as resolution of  the  `weakest reasonable version of the Kakeya problem'.

\section{Notions of dimension}


To define the Hausdorff dimension, we first define the $s$-dimensional Hausdorff (outer) measure of a set $A$ as
$$
{\mathscr H}^s(A)=\lim_{r\to0}\inf\left\{\sum_j|U_j|^s:\ A\subset\bigcup_j U_j,\ |U_j|\le r\right\},
$$
where $|U|$ denotes the diameter of the set $U$. Then
$$
\dim_{\rm H}(A)=\inf\{s:\ {\mathscr H}^s(A)=0\}.
$$
When $s$ is an integer, ${\mathscr H}^s$ is proportional to $s$-dimensional Lebesgue measure in $\mathbb{R}^s$ \cite{Falc,JCR}.  If one uses packings instead of covers, then one obtains the packing dimension $\dim_{\rm P}(A)$; an   important dual to the Hausdorff dimension.  See \cite[Chapter 3]{Falc} for the precise definition, and also (\ref{packingdim}) for a definition in terms of the upper box-counting dimension, which we now define.

The upper box-counting dimension (also referred to as the (upper) Minkowski dimension) is defined for any bounded set $A$, and has a variety of equivalent definitions, the most common being
\begin{equation}\label{boxdef}
\dim_{\rm B}(A)=\limsup_{\epsilon\to0}\frac{\log N(A,\epsilon)}{-\log\epsilon},
\end{equation}
where $N(A,\epsilon)$ denotes the minimum number of balls of radius $\epsilon$ required to cover $A$.  We will use an equivalent definition in Section \ref{open}. Note that it is an immediate consequence of the definition that $\dim_{\rm B}(A)=\dim_{\rm B}(\overline{A})$ (where $\overline A$ denotes the closure of $A$) in contrast to the Hausdorff and packing dimensions, which do not have this property. Indeed, the packing dimension may be expressed in terms of the upper box dimension as
\begin{equation}\label{packingdim}
\dim_{\rm P}(A) = \inf \{ \sup_i \dim_{\rm B}(A_i) : A = \cup_{i=1}^{\infty} A_i \}.
\end{equation}
see \cite[Proposition 3.8]{Falc}. Replacing the $\limsup$ in (\ref{boxdef}) by a $\liminf$ we obtain the lower box-counting dimension:
\begin{equation}\label{Lboxdef}
\dim_{\rm LB}(A)=\liminf_{\epsilon\to0}\frac{\log N(A,\epsilon)}{-\log\epsilon}.
\end{equation}

Finally, the Assouad dimension is similar to the upper box-counting dimension, but looks to minimise a local quantity rather than a global one.  It is defined by
\begin{align*}
\dim_{\rm A}(A)&=\inf \Big\{s \geq 0 :\ \mbox{there exists }C>0\mbox{ such that }\\
&\qquad\qquad N(B(x,\delta) \cap A, \epsilon )  \leq C(\delta/\epsilon)^s\mbox{ for every }x\in A,\ 0<\epsilon<\delta\Big\}.
\end{align*}
It is well-known that in general
\[
\dim_{\rm H}(A)\le\dim_{\rm LB}(A)\le\dim_{\rm B}(A) \le\dim_{\rm A}(A)
\]
and
\[
\dim_{\rm H}(A)\le\dim_{\rm P}(A)\le\dim_{\rm B}(A)
\]
for any bounded set $A$ \cite{Falc,JCR}. The lower box dimension and the packing dimension are not generally comparable.

\section{The cut-and-move technique}

Our first tool is the following simple `cut-and-move' (decomposition and shifting) lemma, which works for any dimension that is monotonic, stable under finite unions, and translation invariant. A dimension is \emph{monotonic} if
$$
A\subseteq B\qquad\Rightarrow\qquad \dim(A)\le\dim(B),
$$
\emph{stable under finite unions} if
\begin{equation}\label{stable}
\dim\left(\bigcup_{j=1}^n A_j\right)=\max_j\dim(A_j),
\end{equation}
and \emph{translation invariant} if
$$
\dim(T_aA)=\dim(A),\qquad\mbox{where}\qquad T_aA=\{x-a:\ x\in A\}.
$$

These properties are enjoyed by most of the common dimensions, including: Hausdorff, packing, upper-box (Minkowski), and Assouad (see Falconer \cite{Falc} or Robinson \cite{JCR} for a survey of dimensions). Among these, the Hausdorff and packing dimensions are also stable under countable unions (take a countable union in (\ref{stable}), replacing the $\max$ by a $\sup$ on the right-hand side).

We denote by ${\mathscr B}(\R^d)$ the collection of all bounded subsets of $\R^d$.

\begin{lemma}\label{shift}
Suppose that $\dim\colon {\mathscr B}(\R^d)\to[0,\infty)$ is monotonic, stable under finite unions, and translation invariant. Let $K$ be any bounded subset of $\R^d$ with
\begin{equation}\label{Xis}
K=\bigcup_{j=1}^n K_j,
\end{equation}
where the union need not be disjoint. Then for any $\{a_j\}_{j=1}^n\subset \R^d$,
\begin{equation}\label{moved}
\dim(K)=\dim\left(\bigcup_{j=1}^n T_{a_j}K_j\right).
\end{equation}
The same statements remain valid for the lower box-counting dimension (which is not stable under finite unions). If $\dim$ is stable under countable unions then one can allow countable unions in (\ref{Xis}) and (\ref{moved}).
\end{lemma}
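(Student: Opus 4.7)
The proof is essentially a one-line calculation that just chains together the three hypothesised properties of $\dim$, so the plan is to set up that chain cleanly rather than to introduce any new machinery.

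First I would apply stability under finite unions to the given decomposition $K=\bigcup_{j=1}^n K_j$, obtaining
\[
\dim(K)=\max_{1\le j\le n}\dim(K_j).
\]
Next I would invoke translation invariance on each piece separately, which gives $\dim(T_{a_j}K_j)=\dim(K_j)$ for every $j$. Finally, I would apply stability under finite unions a second time, now to the translated family $\{T_{a_j}K_j\}_{j=1}^n$, to get
\[
\dim\!\left(\bigcup_{j=1}^n T_{a_j}K_j\right)=\max_{1\le j\le n}\dim(T_{a_j}K_j)=\max_{1\le j\le n}\dim(K_j)=\dim(K).
\]
This is exactly (\ref{moved}).

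For the countable case, the only input that changes is that stability under finite unions is upgraded to stability under countable unions, allowing the $\max$ over $j\in\{1,\dots,n\}$ to be replaced by a $\sup$ over $j\in\N$ in each of the two places it appears. Monotonicity is not explicitly used in this chain, but note that it is in fact implied by stability under finite unions, since if $A\subseteq B$ then $\dim(B)=\dim(A\cup B)=\max\{\dim(A),\dim(B)\}\ge \dim(A)$, so no separate step is needed for it.

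There is no real obstacle here, since both occurrences of $K$ have the same ``dimension profile'' $\{\dim(K_j)\}_j$ and translation invariance lets the translations disappear from that profile; the only thing to be careful about is that the same index set (and the same pieces $K_j$) is used in both unions, so that the two applications of finite/countable stability are comparing the same multisets of dimensions.
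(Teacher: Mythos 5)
Your proof is correct and follows essentially the same chain as the paper's: stability under finite unions to reduce to $\max_j\dim(K_j)$, translation invariance on each piece, then stability again for the translated family. Your side remark that monotonicity is redundant (being implied by stability under finite unions as stated) is also accurate, though the paper invokes it explicitly as a first step.
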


\begin{proof}
  Since $K_j\subseteq K$ and $\dim$ is monotonic, $\dim(K_j)\le\dim(K)$. Since $\dim$ is stable under finite unions, $\dim(K)=\max_j \dim(K_j)$. Since the dimension is unaffected by translations, $\dim(T_{a_j}K_j)=\dim(K_j)$. It follows that
  $$
  \dim(K)=\max_j \dim(K_j)=\max_j \dim(T_{a_j}K_j)=\dim\left(\cup_{j=1}^n T_{a_j}K_j\right).
  $$
  
  In the case of the lower box-counting dimension, take $\delta>0$ and let $\mathcal{U}$ be a $\delta$-cover of $\cup_{j=1}^n T_{a_j}K_j.$  Then
\[
\cup_{j=1}^n T_{a_j}^{-1}(\mathcal{U})
\]
is a $\delta$-cover of $K$ and therefore
\[
 \dim_{\mathrm{LB}}(K)  \leq \liminf_{\delta\to0} \left( \frac{\log N(\cup_{j=1}^n T_{a_j}K_j,\delta)}{-\log\delta} + \frac{\log n}{-\log \delta} \right) =\dim_{\mathrm{LB}}\left(\cup_{j=1}^n T_{a_j}K_j\right)
\]
as required.  The opposite inequality is proved similarly by starting with a cover of $K$ and then taking all forward images of covering sets by $T_{a_j}$.
\end{proof}

%
%
%

A  simple application of this procedure yields the following result. It says that we can move
all the line segments in any Kakeya set to within an arbitrarily small distance
of the origin without altering its dimension; see Figure 1.

    \begin{lemma}\label{first}
Suppose that $\dim\colon {\mathscr B}(\R^d)\to[0,\infty)$ is any dimension satisfying the conditions of Lemma \ref{shift}, or is the lower box-counting dimension.   Given a Besicovitch set $K$, for any $\epsilon>0$ there exists another Besicovitch set $\hat K$ such that $\dim(\hat K)=\dim(K)$ and $\hat K$ consists of unit line segments whose centres lie within $\epsilon$ of the origin.
  \end{lemma}

  \begin{proof}
Choose $M>0$ such that $K$ is contained in $(-M,M)^d$, and cover the set $(-M,M)^d$ with a disjoint collection of $d$-dimensional cubes $\prod_{i=1}^d[x_i,x'_i)$ with sides of length $M/n$ for some integer $n$ such that $M/n<\epsilon/\sqrt d$; denote this family of cubes by $\{Q_j\}_{j=1}^N$, and their centres by $a_j$. If $K_j$ is taken to be the set of unit line segments constituting $K$ whose centers lie in $Q_j$ then (\ref{Xis}) holds, and hence using Lemma \ref{shift} the set
$$
\hat K=\bigcup_{j=1}^N T_{a_j}K_j
$$
has the properties required.
\end{proof}

\begin{figure}[H]
\centering
\begin{subfigure}{0.4\textwidth}
\includegraphics[width=\textwidth]{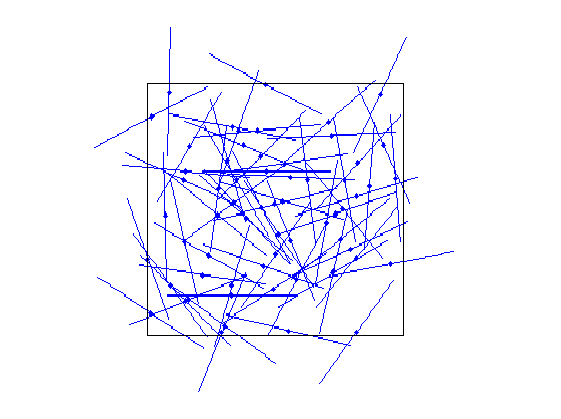}
\end{subfigure}
\begin{subfigure}{0.4\textwidth}
\includegraphics[width=\textwidth]{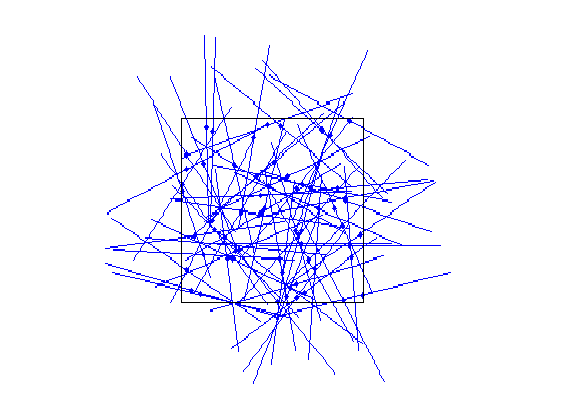}
\end{subfigure}
\begin{subfigure}{0.4\textwidth}
\includegraphics[width=\textwidth]{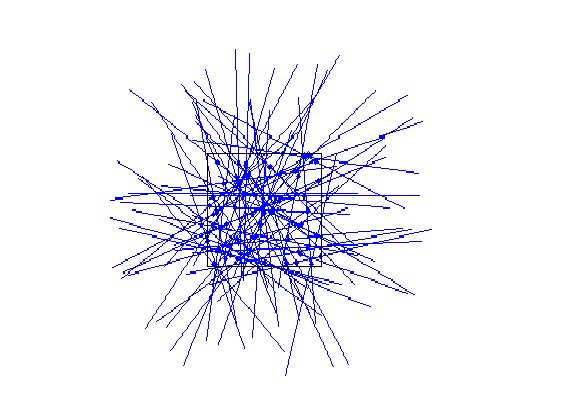}
\end{subfigure}
\begin{subfigure}{0.4\textwidth}
\includegraphics[width=\textwidth]{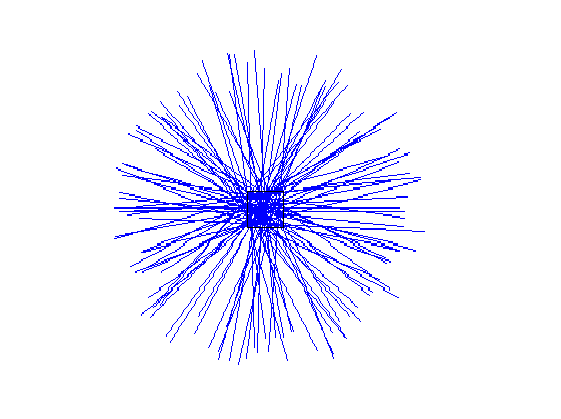}
\end{subfigure}
\caption{The `cut-and-move' procedure of Lemma \ref{shift}, as used in the proof of Lemma \ref{first}, applied to a `discrete' Besicovitch set consisting of 64 line segments (top left) into $4^n$ portions, with $n=1$ (top right), $n=2$ (bottom left), $n=4$ (bottom right). Centres (marked with dots) lie within the squares of sides $4$, $2$, $1$, and $1/4$, respectively.}
\end{figure}

Since the Hausdorff dimension is stable under countable unions, the following improvement holds for Hausdorff dimension.

  \begin{lemma}
    Given a Besicovitch set $K$, there exists a set $\tilde K$ such that $\dim_{\rm H}(\tilde K)=\dim_{\rm H}(K)$ and for every $\epsilon>0$ and every direction there is a unit line segment in $\tilde K$ in the given direction whose centre lies within $\epsilon$ of the origin.
  \end{lemma}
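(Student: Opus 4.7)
The plan is to iterate Lemma \ref{first} along a sequence $\epsilon_k \downarrow 0$ and take the countable union of the resulting Kakeya sets. Concretely, for each $k \in \N$, apply Lemma \ref{first} with $\epsilon = 1/k$ to obtain a Kakeya set $\hat K_k$ with $\dim_{\rm H}(\hat K_k) = \dim_{\rm H}(K)$ such that every unit line segment in $\hat K_k$ has its centre within $1/k$ of the origin. Define
$$
\tilde K = \bigcup_{k=1}^\infty \hat K_k.
$$

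For the dimension bound, I would use countable stability of the Hausdorff dimension, which gives
$$
\dim_{\rm H}(\tilde K) = \sup_{k \in \N} \dim_{\rm H}(\hat K_k) = \dim_{\rm H}(K).
$$
This is the key point at which the argument for Hausdorff dimension is genuinely stronger than the finite-union version used in Lemma \ref{first}; the analogous statement for upper box-counting dimension would fail because that dimension is not countably stable (the union of countably many finite sets can have positive box dimension).

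For the geometric property, fix any direction $v \in \mathring{S}^{d-1}$ and any $\epsilon > 0$. Choose any integer $k > 1/\epsilon$. Since $\hat K_k$ is itself a Kakeya set, it contains a unit line segment in direction $v$, and by construction its centre lies within $1/k < \epsilon$ of the origin. As $\hat K_k \subseteq \tilde K$, this segment lies in $\tilde K$, giving exactly the required property.

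There is essentially no obstacle: all the real work is done by Lemma \ref{first}, and the only substantive observation is that Hausdorff dimension is stable under countable unions, a point already flagged in the paper. One minor thing worth noting is that $\tilde K$ need not be bounded a priori, but in fact it is, because each $\hat K_k$ consists of unit line segments whose centres lie within $1$ of the origin, so $\tilde K \subseteq B(0, 1 + \tfrac{1}{2})$ and all our notions of dimension apply to it without issue.
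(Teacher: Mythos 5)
Your proof is correct and is essentially identical to the paper's: the authors also take $\tilde K$ to be a countable union of the sets produced by Lemma \ref{first} (with $\epsilon = 2^{-j}$ in place of your $1/k$) and invoke countable stability of the Hausdorff dimension. The extra details you supply (the geometric verification and boundedness) are fine but not needed.
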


  \begin{proof}
    Let $K_j$ be the set $\hat K$ from Lemma \ref{first} constructed with $\epsilon=2^{-j}$, and set $\tilde K=\cup_jK_j$.
  \end{proof}

Of course, this result does not say anything precise about the Hausdorff dimension of Besicovitch sets, but we find it a useful heuristic that provides a strong indication of the plausibility of the conjecture in a perhaps striking way.

If one could only push this result a little further to yield a set with a line segment in every direction whose centre \emph{was} the origin (i.e.\ the ball of radius 1/2) then this would of course yield a proof of the Kakeya Conjecture. We can achieve this by allowing a small perturbation of our original set, which is the idea behind the density result of Section \ref{density}.

  \section{Non-trivial but easy lower bounds for the  dimension of Besicovitch sets}

In this section we present some simple folklore arguments which give non-trivial lower bounds for the dimensions of Besicovitch sets.   While there are stronger results available, we think that the simplicity of the following argument is appealing in its own right. 

We begin with a very simple result for the lower box dimension.

\begin{theorem}\label{LBH}
If $K$ is a Besivovitch set in $\R^d$ then $\dim_{\rm LB}(K)\ge d/2$.
\end{theorem}

\begin{proof}
  Consider the set $K\times K\subset\R^{2d}$. Using elementary properties of the lower box dimension, $\dim_{\rm LB}(K\times K)\le 2\,\dim_{\rm LB}(K)$. Also, note that the map $\phi:\R^d\times\R^d$ defined by $(x,y)\to x-y$ is Lipschitz from $\R^d\times\R^d$ into $\R^d$, and so $\dim_{\rm LB}[\phi(K\times K)]\le\dim_{\rm LB}(K\times K)$. However, the set $\phi(K\times K)$ contains the unit ball in $\R^d$. Combining these facts yields
  $$
  d\le\dim_{\rm LB}([\phi(K\times K)])\le \dim_{\rm LB}(K\times K)\le 2\dim_{\rm LB}(K),
  $$
  from which the result follows immediately.
\end{proof}

A slightly more sophisticated argument, for which we would like to thank an anonymous referee,  yields an improved result.  We will make use of the following version of Marstrand's slicing theorem, see \cite[Theorem 10.10]{mattila} or (the alluded to higher dimensional analogue of) \cite[Corollary 7.2]{Falc}.

  \begin{theorem}[Marstrand's slicing theorem]\label{slicing}
  Let $F \subseteq \mathbb{R}^n$ and  $E \subseteq V$  be Borel sets where $V$ is a proper subspace of $\mathbb{R}^n$.  If
\[
\dim_{\rm H}  \, F \cap \left(  x+ V^\perp \right) \geq t
\]
for all $x \in E$, then
\[
\dim_{\rm H} F \geq \dim_{\rm H} E + t.
\]
  \end{theorem}

  \begin{theorem}\label{LBD}
    If $K$ is a Besicovitch set in $\R^d$ then
\[
\dim_{\rm LB}(K)\ge (d+1)/2,
\]
\[
\dim_{\rm P}(K) \geq (d+1)/2
\]
and
\[
\dim_{\rm P}(K) + \dim_{\rm H}(K)   \geq d+1.
\]
  \end{theorem}

\begin{proof}
Let $K \subseteq \mathbb{R}^d$ be a Besicovitch set.  Let $v \in \mathbb{R}^d$ and let
\[
A_v = \{ (x,y) \in \mathbb{R}^d \times  \mathbb{R}^d : x-y=v\}.
\]
This gives a $d$-dimensional smooth parametrisation of a family of pairwise disjoint  $d$-dimensional affine subspaces of $\mathbb{R}^{2d}$.  It is easy to verify that $(K \times K) \cap A_v$ contains a line segment for each $v$ such that $|v| < 1$, but we include the details.  Write $v$ in polar coordinates as $v=|v| \theta$ for some $\theta \in S^{d-1}$.  Since $K$ is a Besicovitch set, we know it contains a unit line segment in direction $\theta$, i.e., for some $t \in \mathbb{R}^d$ we have $\{x \theta+t : x \in [0,1] \} \subset K$.  It follows that $\{(x \theta+t, y \theta +t ) : x,y \in [0,1] \} \subseteq K \times K$ and so
\begin{eqnarray*}
(K \times K) \cap A_v &\supset & \{(x \theta+t, y \theta +t ) : x,y \in [0,1] \text{ and } x-y = |v| \} \\
&=& \{((y+|v|)\theta+t, y\theta +t ) : y \in [0,1- |v|]  \} 
\end{eqnarray*}
which is a line segment of length $\sqrt{2}(1- |v|)$ provided $|v| < 1$.    It then follows from Marstrand's slicing theorem (Theorem \ref{slicing} above) that
\begin{equation} \label{keylowerest}
\dim_{\rm H} (K \times K) \geq \dim_{\rm H} \{ v \in \mathbb{R}^d : |v| < 1\} +1 = d+1.
\end{equation}
 The stated results all now follow immediately from classical results on dimensions of products.  In particular, we obtain
\[
d+1 \leq \dim_{\rm H} (K \times K)  \leq \dim_{\rm LB} (K \times K)  \leq 2 \dim_{\rm LB} (K) ,
\]
\[
d+1 \leq \dim_{\rm H} (K \times K)  \leq \dim_{\rm P} (K \times K)  \leq 2 \dim_{\rm P} (K) ,
\]
and
\[
d+1 \leq \dim_{\rm H} (K \times K)  \leq \dim_{\rm H} (K \times K)  \leq \dim_{\rm P}(K) + \dim_{\rm H}(K).
\]
See \cite{Falc, How} for a discussion of the various product formulae we use here. In particular, the inequalities used above apart from (\ref{keylowerest}) hold for \emph{any} set.
\end{proof}
  Unfortunately, we do not have a `self-product formula' for Hausdorff dimension which prevents us from proving the lower bound $(d+1)/2$ for Hausdorff dimension using this approach. This bound does hold, however, but the simplest proof we are aware of is Bourgain's `bush method', see \cite{Bourgain,Green}.

  \section{Density of Besicovitch sets with positive measure}\label{density}

  Let $\mathring S^{d-1}$ denote the unit sphere in $\R^d$ with antipodal points identified. We encode a Besicovitch set in $\R^d$ as a bounded map $f:\mathring S^{d-1}\to\R^d$, where $f(x)$ gives the centre of the unit line segment oriented in the $x$ direction. We denote by $B(\mathring S^{d-1})$ the collection of all such maps, and make this a Banach space by equipping it with the supremum norm.

Given such a map $f$, we define $K(f)$ to be the Besicovitch set encoded by $f$, explicitly
$$
K(f)=\adjustlimits\bigcup_{x\in\mathring S^{d-1}}\bigcup_{t\in[-1/2,1/2]}f(x)+tx.
$$

 We denote by ${\mathscr K}(\R^d)$ the collection of all non-empty compact subsets of $\R^d$.  If we define
$$
\rho(A,B)=\adjustlimits\sup_{a\in A}\inf_{b\in B}|a-b|,
$$
 then the Hausdorff distance (which yields a metric on ${\mathscr K}(\R^d)$) is given by
$$
\dist_{\rm H}(A,B)=\max(\rho(A,B),\rho(B,A)).
$$

We make the following simple observation.

\begin{lemma}\label{continuousK}
If $f,f_0\in B(\mathring S^{d-1})$ then
$$
\dist_{\rm H}\left(\overline{K(f)},\overline{K(f_0)}\right)\le  \| f - f_0\|_\infty.
$$
\end{lemma}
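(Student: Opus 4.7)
The proof is essentially a one-step observation, but let me describe the plan carefully.

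The plan is to exploit the natural pairing between points of $K(f)$ and $K(f_0)$ that is built into the parametrisation. Every point $p\in K(f)$ is of the form $p=f(x)+tx$ for some $x\in\mathring S^{d-1}$ and $t\in[-1/2,1/2]$. I will pair it with the point $q=f_0(x)+tx\in K(f_0)$ on the unit segment of $K(f_0)$ in the \emph{same} direction and at the \emph{same} parameter. Then
\[
|p-q|=|f(x)-f_0(x)|\le\|f-f_0\|_\infty.
\]
Taking the infimum over $K(f_0)$ and then the supremum over $K(f)$ yields $\rho(K(f),K(f_0))\le\|f-f_0\|_\infty$, and the roles of $f$ and $f_0$ are symmetric.

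The only remaining subtlety is that the statement is phrased in terms of $\overline{K(f)}$ and $\overline{K(f_0)}$ rather than the sets themselves. Here I would use the standard fact that for any bounded sets $A,B\subseteq\R^d$,
\[
\rho(\overline A,\overline B)=\rho(A,B),
\]
which follows because $d(\cdot,B)=d(\cdot,\overline B)$ is continuous, so its supremum over $A$ and over $\overline A$ coincide. Applying this to both orderings of $K(f)$ and $K(f_0)$ converts the bound on $\rho(K(f),K(f_0))$ into the required bound on $\dist_{\rm H}(\overline{K(f)},\overline{K(f_0)})$.

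There is no real obstacle. The only thing to be slightly careful about is making sure the supremum in the definition of $\rho$ is attained (or arbitrarily approximated) on the original, not yet closed, sets; this is exactly what the continuity of $d(\cdot,B)$ guarantees. Alternatively, one could note that the estimate $|p-q|\le\|f-f_0\|_\infty$ passes to limits, so $\rho(\overline{K(f)},\overline{K(f_0)})\le\|f-f_0\|_\infty$ directly. Either route gives the lemma in a few lines.
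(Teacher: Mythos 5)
Your proof is correct and is essentially the same as the paper's: both pair each point of $K(f)$ with the corresponding point of $K(f_0)$ on the segment in the same direction, observe the segments are translated by at most $\|f-f_0\|_\infty$, and conclude by symmetry. Your extra care about passing the bound to the closures is a minor refinement the paper glosses over.
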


\begin{proof}
  Given $f_0,f\in B(\mathring S^{d-1})$, set $\epsilon=\| f - f_0\|_\infty$ and observe that any point in $K(f_0)$ lies on a unit line segment with centre $f_0(x)$, for some $x\in\mathring S^{d-1}$, and that there is a corresponding point $f(x)\in K(f)$ that lies within $\epsilon$ of $f_0(x)$, since the unit line segment is translated by no more than $\epsilon$ on changing from $f_0$ to $f$. Thus $\rho\left(\overline{K(f)},\overline{K(f_0)}\right)\le\epsilon$. This argument is symmetric between $K(f)$ and $K(f_0)$, and so the desired result follows.
\end{proof}

  We now employ a variant of the arguments of the previous section to prove the density of maps $f$ for which $\dim_{\rm H}(K(f))=d$, and more strongly for which $\mu(K(f))>0$ (where $\mu(A)$ denotes the $d$-dimensional Lebesgue measure of $A$). We note that Besicovitch sets with zero Lebesgue measure were constructed by Besicovitch \cite{B}.

\begin{proposition} \label{denseprop}
  The collection of functions $f\in B(\mathring S^{d-1})$ such that
$$\mu(K(f))>0$$
and, in particular,
  $$\dim_{\rm H}(K(f))=d$$
  is dense.
\end{proposition}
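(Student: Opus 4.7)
Given $f_0 \in B(\mathring S^{d-1})$ and $\epsilon > 0$, the plan is to produce a nearby $f$ whose Kakeya set contains a ``partial cone'' of full Hausdorff dimension, by fixing the centres of a large collection of unit segments at a common point. Setting $M = \|f_0\|_\infty$, partition $[-M, M]^d$ into disjoint half-open cubes $Q_1, \ldots, Q_N$ of side length $\delta < \epsilon/\sqrt{d}$, with centres $p_1, \ldots, p_N$. The preimages $E_i := f_0^{-1}(Q_i)$ partition $\mathring S^{d-1}$, and setting $f(x) = p_i$ for $x \in E_i$ gives $\|f(x) - f_0(x)\| \le \epsilon/2$ pointwise, so $\|f - f_0\|_\infty < \epsilon$.

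The next step is to extract a ``fat'' part of the partition. By finite subadditivity of $(d-1)$-dimensional Hausdorff outer measure together with $\mathscr{H}^{d-1}(\mathring S^{d-1}) > 0$, there exists $i^*$ with $\mathscr{H}^{d-1}(E_{i^*}) > 0$, so $\dim_{\rm H} E_{i^*} \ge d-1$. By construction, every segment indexed by $x \in E_{i^*}$ shares the centre $p_{i^*}$, so $K(f)$ contains the partial cone
\[
P := \{\, p_{i^*} + sx : x \in E_{i^*},\ s \in [1/4, 1/2]\,\}.
\]
A direct computation using $\langle x_1, x_2\rangle = 1 - \tfrac12 \|x_1 - x_2\|^2$ for unit vectors gives the identity
\[
\|(p_{i^*} + s_1 x_1) - (p_{i^*} + s_2 x_2)\|^2 = (s_1 - s_2)^2 + s_1 s_2 \|x_1 - x_2\|^2,
\]
so restricting $s$ to $[1/4, 1/2]$ makes $\phi(x,s) = p_{i^*} + sx$ bi-Lipschitz on $E_{i^*} \times [1/4, 1/2]$. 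Using the Hausdorff-dimension product inequality $\dim_{\rm H}(A \times B) \ge \dim_{\rm H} A + \dim_{\rm H} B$ (valid for arbitrary subsets of Euclidean space, via Hausdorff content), we obtain
\[
\dim_{\rm H} K(f) \ge \dim_{\rm H} P = \dim_{\rm H}(E_{i^*} \times [1/4, 1/2]) \ge \dim_{\rm H} E_{i^*} + 1 \ge d.
\]

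The main technical subtlety is that $f_0$ is only assumed bounded, not Borel measurable, so the partition pieces $E_i$ may fail to be Borel. This is harmless for the Hausdorff-dimension claim as stated, because outer Hausdorff measure, Hausdorff content, and the product-dimension inequality all make sense for arbitrary sets; the construction is essentially a ``cut-and-fix'' analogue of Lemma \ref{first}, with the boost in dimension coming from pigeonholing on $\mathscr{H}^{d-1}$-content rather than merely translating dimension-preservingly. However, measurability is precisely where additional work would be needed to obtain the stronger conclusion $\mu(K(f)) > 0$ alluded to in the introduction, since cones over merely outer-positive bases need not have positive Lebesgue outer measure.
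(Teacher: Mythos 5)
Your proof is correct, but it takes a genuinely different route from the paper's. Both arguments begin identically: snap each centre $f_0(x)$ to the centre of the partition cube containing it, so that $K(f)$ decomposes into finitely many bundles $U_j$ of unit segments sharing a common centre $a_j$. The paper then finishes with its cut-and-move lemma: translating each $U_j$ by $-a_j$ reassembles the ball $\overline{B}(0,1/2)$, and since Hausdorff dimension is translation invariant and stable under finite unions (valid for arbitrary, not necessarily Borel, pieces, because ${\mathscr H}^s$ is an outer measure), $\dim_{\rm H}K(f)=\max_j\dim_{\rm H}U_j=\dim_{\rm H}B(0,1/2)=d$. You instead pigeonhole on ${\mathscr H}^{d-1}$-outer measure to locate a single bundle whose direction set $E_{i^*}$ has dimension at least $d-1$, and compute the dimension of the corresponding truncated cone directly via a bi-Lipschitz parametrisation plus a product inequality; this is more work but more explicit, exhibiting one cone of full dimension rather than letting finite stability find it. Two points to tidy up: (i) your appeal to $\dim_{\rm H}(A\times B)\ge\dim_{\rm H}A+\dim_{\rm H}B$ for arbitrary sets is safe here only because one factor is an interval, for which the elementary Fubini-type slicing inequality ${\mathscr H}^{s+1}(A\times I)\ge c\,|I|\,{\mathscr H}^s(A)$ holds for every $A$; the version with two arbitrary factors is more delicate, so you should invoke the interval case specifically. (ii) $E_{i^*}$ lives in $\mathring S^{d-1}$ (antipodal points identified), so to form $p_{i^*}+sx$ you should lift it to a symmetric subset of $S^{d-1}$, which is harmless since both $p_{i^*}+sx$ and $p_{i^*}-sx$ lie on the segment for $s\in[1/4,1/2]$. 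Finally, your closing worry that measurability obstructs the positive-measure corollary is unfounded: the paper obtains $\mu(K(f))>0$ from subadditivity of Lebesgue \emph{outer} measure applied to $\sum_j\mu(T_{a_j}U_j)\ge\mu(B(0,1/2))$, which requires no measurability of the pieces.
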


\begin{proof}
Take $f_0\in B(\mathring S^{d-1})$ and $\epsilon>0$. Let $M>0$ be such that $K(f_0)$ is contained in $(-M,M)^d$. Now proceed as in the proof of Lemma \ref{first} to find a disjoint family of cubes $\{Q_j\}_{j=1}^N$ that cover $K(f_0)$, whose sides have length $<\epsilon/\sqrt d$ with centres $a_j$. Now we do not shift the components of the set $K(f_0)$, but rather define an element $f\in B(\mathring S^{d-1})$ by setting
$$
f(x)=a_j\qquad\mbox{if}\quad f_0(x)\in Q_j;
$$
clearly $\|f-f_0\|_\infty<\epsilon$.

Now we can write $K(f)$ as the finite union $K(f)=\bigcup_{j=1}^N U_j$, where $U_j$ consists of all those line segments with centre $a_j$, and observe that
\begin{equation}\label{ball}
\bigcup_{j=1}^N T_{a_j}U_j=B(0,1/2).
\end{equation}
Since the Lebesgue measure is subadditive
$$
\sum_{j=1}^N \mu(T_{a_j}U_j)\ge \mu(B(0,1/2)).
$$
It follows that there exists an $i$ such that $\mu(T_{a_i}U_i)\ge\mu(B(0,1/2))/N$,
and so, since the Lebesgue measure is monotonic and invariant under translations,
$$
\mu(K(f))\ge\mu(U_i)=\mu(T_{a_i}U_i)>0.
$$
Finally, since $\mu$ is comparable to $\mathcal{H}^d$ it follows that $\dim_{\rm H}(K(f))=d$.
\end{proof}

\section{A residual collection of Besicovitch sets with maximal dimension}\label{open}

We now show that a residual collection of Besicovitch sets has maximal upper box-counting dimension.
Recall that a set is \emph{nowhere dense} if its closure has empty interior, and a set is \emph{residual} if its complement is the countable union of nowhere dense sets, see Oxtoby \cite{Oxtoby}.

We make use of another equivalent definition of the upper box-counting dimension (see Falconer \cite{Falc}, for example). Let $N_{\rm disj}(A,\epsilon)$ denote the maximum number of disjoint closed $\epsilon$-balls with centres in $A$. Then it is well known that
\begin{equation} \label{boxdef2}
\dim_{\rm B}(A)=\limsup_{\epsilon\to0}\frac{\log N_{\rm disj}(A,\epsilon)}{-\log\epsilon}.
\end{equation}

\begin{theorem}
  The collection of functions $f\in B(\mathring S^{d-1})$ such that
  $$\dim_{\rm B}(K(f))=d$$
  is a residual subset of $B(\mathring S^{d-1})$.
\end{theorem}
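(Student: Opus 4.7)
The plan is to express the target set as a countable intersection of open dense subsets of $B(\mathring S^{d-1})$ and then invoke Baire's theorem. Using the characterisation (\ref{boxdef}) via $N_{\rm disj}$, and observing that $\dim_{\rm B}(K(f))\le d$ automatically (as $K(f)$ is a bounded subset of $\R^d$), the condition $\dim_{\rm B}(K(f))=d$ is equivalent to: for every $s<d$ and every $N\in\N$ there exists $\epsilon\in(0,1/N)$ with $N_{\rm disj}(K(f),\epsilon)>\epsilon^{-s}$. Accordingly I would fix a sequence $s_k\uparrow d$ and set
$$
E_{s,N}=\Bigl\{f\in B(\mathring S^{d-1}): \exists\,\epsilon\in(0,1/N) \text{ with } N_{\rm disj}(K(f),\epsilon)>\epsilon^{-s}\Bigr\},
$$
so that the target set equals $\bigcap_{k\ge 1}\bigcap_{N\ge 1} E_{s_k,N}$.

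Next I would verify that each $E_{s,N}$ is open. Given $f\in E_{s,N}$, fix a witnessing $\epsilon<1/N$ and points $p_1,\ldots,p_m\in K(f)$ with $|p_i-p_j|>2\epsilon$ and $m>\epsilon^{-s}$. Each $p_i$ lies on a segment of the form $f(x_i)+t_i x_i$; for any $g$ with $\|g-f\|_\infty<\eta$ the point $q_i:=g(x_i)+t_i x_i$ lies in $K(g)$ and satisfies $|q_i-p_i|<\eta$, as in the proof of Lemma \ref{continuousK}. Consequently $|q_i-q_j|>2(\epsilon-\eta)$, so that $N_{\rm disj}(K(g),\epsilon-\eta)\ge m$. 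Because the inequality $m>\epsilon^{-s}$ is strict and $(\epsilon-\eta)^{-s}\to\epsilon^{-s}$ as $\eta\to 0^+$, we still have $m>(\epsilon-\eta)^{-s}$ for all sufficiently small $\eta>0$, and of course $\epsilon-\eta<1/N$. Thus an entire $B(f,\eta)$-ball in $B(\mathring S^{d-1})$ lies in $E_{s,N}$.

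Density of each $E_{s,N}$ follows essentially for free from the preceding section: the Corollary to Proposition \ref{denseprop} already furnishes, arbitrarily close to any $f_0$, an $f$ with $\mu(K(f))>0$. Any bounded subset of $\R^d$ with positive Lebesgue measure has $\dim_{\rm B}=d$, so such an $f$ belongs to $E_{s,N}$ for every $(s,N)$. Combining the two properties, each $E_{s_k,N}$ is open and dense, the countable intersection is residual by Baire's theorem, and it is contained in $\{f:\dim_{\rm B}(K(f))=d\}$, which is therefore itself residual.

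The main obstacle is the openness step: the quantity $N_{\rm disj}(K(\cdot),\epsilon)$ is only lower semi-continuous under perturbation once one slightly shrinks the radius, so the argument must exploit the strict inequality $m>\epsilon^{-s}$ to absorb the resulting gap between $\epsilon^{-s}$ and $(\epsilon-\eta)^{-s}$. Everything else is bookkeeping.
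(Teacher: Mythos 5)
Your proof is correct and follows essentially the same route as the paper: write the target set as a countable intersection of sets of the form $E_{s,N}$, obtain density from the constructions of the previous section, and obtain openness from the stability of packing numbers $N_{\rm disj}$ under small perturbations of $f$ (the paper factors this through the continuity of $f\mapsto\overline{K(f)}$ in the Hausdorff metric, which is only a cosmetic difference). The one genuinely different detail is the openness step, where you shrink the radius from $\epsilon$ to $\epsilon-\eta$ and absorb the loss via the strict inequality $m>\epsilon^{-s}$, while the paper keeps the radius fixed and instead uses the positive separation $\eta$ between finitely many disjoint closed balls; both arguments are valid.
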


\begin{proof}
Define
$$
{\mathscr F}_{m,n}=\{f\in B(\mathring S^{d-1}):\ \exists\ \delta<1/n\mbox{ such that}\ N_{\rm disj}(\overline{K(f)},\delta)> \delta^{1/m-d}\}.
$$
It follows from (\ref{boxdef2}) that
\begin{align*}
\{f\in B(\mathring S^{d-1}):\ \dim_{\rm B}(K(f))=d\}&=\{f\in B(\mathring S^{d-1}):\ \dim_{\rm B}(\overline{K(f)})=d\}\\
&=\bigcap_{n=1}^\infty\bigcap_{m=1}^\infty{\mathscr F}_{m,n},
\end{align*}
since $\dim_{\rm B}(A)=\dim_{\rm B}(\overline{A})$.  Note that we have already shown that for each $m,n\in\N$, the set ${\mathscr F}_{m,n}$ is dense, since it contains all those $f$ for which $\dim_{\rm B}(K(f))=d$, see Proposition \ref{denseprop}.

Recall that ${\mathscr K}(\R^d)$ is the collection of all non-empty compact subsets of $\R^d$, metrised by the Hausdorff distance $\dist_{\rm H}$. It follows from the result of Lemma \ref{continuousK} that the map $f\mapsto\overline {K(f)}$ is continuous from $B(\mathring S^{d-1})$ into ${\mathscr K}(\R^d)$ and, therefore, it remains only to show that for any $\epsilon$ and $r>0$ the set
$$
F_{\epsilon,r}:=\{A\in{\mathscr K}(\R^d):\ \exists\ \delta<\epsilon\ \mbox{such that}\ N_{\rm disj}(A,\delta)>\delta^{-r}\}
$$
is open. Given any $A\in F_{\epsilon,r}$, suppose that there are $N>\delta^{-r}$ disjoint closed balls of radius $\delta<\epsilon$ with centres $\{a_j\}\subset A$. Then these balls are all at least some distance $\eta>0$ apart, and so any set $B$ with $\dist_{\rm H}(A,B)<\eta/2$ contains points $b_j$ with $|a_j-b_j|<\eta/2$ and the $\delta$ balls with centres $\{b_j\}$ are still disjoint. Thus $N_{\rm disj}(B,\delta)\ge N>\delta^{-r}$ and hence $B\in F_{\epsilon,r}$, which is sufficient.

The result now follows since
$$
\{f\in B(\mathring S^{d-1}):\ \dim_{\rm B}(K(f))=d\}=\bigcap_{m,n}{\mathscr F}_{m,n}
$$
is the countable intersection of open dense sets and therefore residual.\end{proof}

An interesting further problem would be to consider whether the Baire generic Besicovitch sets (with respect to our parameterisation) also have full Hausdorff (or lower box) dimension and whether they have zero or positive Lebesgue measure.  Of course, if one believes the Kakeya conjecture itself, then the Baire generic Besicovitch sets also have full Hausdorff dimension, but for the question of measure it is not so clear what to conjecture.
K\"orner \cite{K} proved that Baire generic Besicovitch sets have zero measure, but his parameterisation was different from ours and so the result in our setting does not necessarily follow from \cite{K}.

\section{The Assouad dimension of Besicovitch sets with half-infinite lines}

In this section we prove that any set $K \subseteq \mathbb{R}^d$ which contains a half-infinite line in every direction has full Assouad dimension.  More precisely, let $K \subseteq \mathbb{R}^d$ be such that for all $\theta \in S^{d-1}$ there exists a translation $t_\theta \in \mathbb{R}^d$ such that
\begin{equation}\label{whatstt}
\{ \lambda \theta + t_\theta : \lambda \in [0, \infty) \} \subseteq K.
\end{equation}
We refer to such a set as a `half-extended Besicovitch set'.

\begin{theorem} \label{assouad}
Any half-extended Besicovitch set $K \subseteq \mathbb{R}^d$ has full Assouad dimension, i.e., $\dim_{ \rm A} K = d$.
\end{theorem}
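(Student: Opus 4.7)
The plan is to prove the non-trivial direction $\dim_{\rm A}K\ge d$ directly from the definition, by exhibiting, for every $s<d$ and every $C>0$, a point $x\in K$ and scales $0<\epsilon<\Delta$ with $N(B(x,\Delta)\cap K,\epsilon)>C(\Delta/\epsilon)^s$. In fact I will aim for the stronger estimate $N\gtrsim(\Delta/\epsilon)^d$ with $\Delta/\epsilon$ arbitrarily large, which is enough since $(\Delta/\epsilon)^{d-s}\to\infty$. The feature I plan to exploit is that the Assouad definition concerns only the \emph{ratio} $\Delta/\epsilon$, not the absolute size of $\epsilon$, so the half-infinite lines give plenty of room to work at genuinely large scales.

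The construction will be a quantitative version of the ``bouquet of half-lines through a point''. I will fix a large parameter $T$, choose a $(2/T)$-separated family $\theta_1,\ldots,\theta_N$ inside a fixed hemisphere of $S^{d-1}$ (so $N\asymp T^{d-1}$), and set $R:=\max_i|t_{\theta_i}|$, which is finite because $N$ is. Taking $\epsilon$ a small multiple of $R$ and $M$ a large multiple of $RT$ (say $\epsilon:=2R$, $M:=5RT$), each half-line $L_{\theta_i}\subseteq K$ contributes a segment of length $\asymp M$ to $B(0,M)$, and I plan to mark on it the $\asymp T$ points $t_{\theta_i}+k\epsilon\theta_i$ with $k$ in the upper range (roughly $k\ge 2T$), giving $\asymp T^d$ points overall. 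These are $\epsilon$-separated within a line trivially; the cross-line separation follows from bounding the distance from a marked point on line $i$ to the full line through $L_{\theta_j}$ below by $k\epsilon\sin\angle(\theta_i,\theta_j)-2R$, which is comfortably $\ge\epsilon$ for $k\ge 2T$ (since $\sin\alpha\gtrsim|\theta_i-\theta_j|\ge 2/T$ for $\alpha$ in a hemisphere). Shifting the centre to $x:=t_{\theta_1}\in K$ costs at most a factor of $2$ in the radius; setting $\Delta:=2M$ yields $N_{\rm disj}(K\cap B(x,\Delta),\epsilon)\gtrsim T^d\asymp(\Delta/\epsilon)^d$, and letting $T\to\infty$ finishes the proof.

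The main obstacle I expect is conceptual rather than technical: the assignment $\theta\mapsto t_\theta$ is completely unrestricted, so $R=R(T)$ can blow up arbitrarily fast as the direction net is refined. This forbids the naive approach of fixing a bounded ball and sending $\epsilon\to 0$. The resolution, as above, is to let both $\Delta$ and $\epsilon$ grow with $T$ while keeping $\Delta/\epsilon=O(T)$ diverging; the Assouad dimension is blind to this simultaneous inflation. Once this scale-invariant viewpoint is adopted the rest is the standard bouquet packing count: a bundle of $\asymp T^{d-1}$ lines emanating from a region of diameter $O(R)$ admits $\asymp T^d$ mutually $\epsilon$-separated points inside a ball of radius $O(RT)$.
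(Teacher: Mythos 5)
Your argument is correct, and it takes a more hands-on route than the paper. The paper packages the same underlying idea as a weak-tangent argument: it first proves a general proposition that if $S_k(A)\cap\overline{B}_d(0,1)\to\hat A$ in the Hausdorff metric for a sequence of similarities $S_k$, then $\dim_{\rm A}A\ge\dim_{\rm A}\hat A$, and then shows that for $S_k(x)=x/k$ the rescaled half-extended Kakeya set converges to the closed unit ball, whose Assouad dimension is $d$. Your proof skips the tangent machinery and performs the packing count directly: a finite $(2/T)$-separated net of directions has $R=\max_i|t_{\theta_i}|<\infty$, and at scales $\epsilon\asymp R$, $\Delta\asymp RT$ the translations become negligible, so the bouquet of half-lines carries $\asymp T^d$ pairwise $\epsilon$-separated points inside $B(x,\Delta)$. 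Both proofs hinge on the same two observations --- Assouad dimension sees only the ratio $\Delta/\epsilon$, so one may ``zoom out'' rather than shrink $\epsilon$; and passing to a finite direction net makes the translations uniformly bounded --- so the difference is one of packaging: the paper isolates a reusable weak-tangent lemma, while yours is self-contained and fully quantitative. Two small points to tidy. First, ``hemisphere'' should be a spherical cap of angular radius strictly less than $\pi/2$ (a quarter-sphere, say): two directions in a closed hemisphere can be nearly antipodal, in which case the angle between the corresponding \emph{lines} is small even though $|\theta_i-\theta_j|$ is close to $2$, and your bound $\sin\angle(\theta_i,\theta_j)\gtrsim|\theta_i-\theta_j|$ fails; a cap still contains $\asymp T^{d-1}$ separated directions, so nothing is lost. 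Second, you count $\epsilon$-separated points rather than covering balls, but this costs only a dimensional constant (each $\epsilon$-ball contains boundedly many $\epsilon$-separated points), which is harmless since $(\Delta/\epsilon)^{d-s}\to\infty$.
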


We note that this also implies the same result for `fully-extended Besicovitch sets'; sets containing doubly infinite lines in every direction.

Half-extended and fully-extended Besicovitch sets are relevant in the study of the classical Kakeya problem.  For example, \emph{Keleti's Line Segment Extension Conjecture} \cite{keleti} is that extending any collection of line segments in $\mathbb{R}^d$ to the corresponding collection of doubly infinite lines in the same directions does not alter the Hausdorff dimension.  Keleti proved that this conjecture, if true for a particular $d$, would imply that any (classical) Besicovitch set in $\mathbb{R}^d$ had Hausdorff dimension at least $d-1$ and, if true for all $d$, would imply that  any bounded (classical) Besicovitch set in $\mathbb{R}^d$ had upper box dimension $d$.  Keleti proved that his conjecture is true in the plane.

One of the most common ways of estimating the Assouad dimension from below is to use weak tangents, see \cite{mackaytyson}.  Roughly speaking this says that if you zoom in on a set, any Hausdorff limit you obtain has Assouad dimension no greater than the original set. Here we use this approach, but `zoom out' rather than `zoom in'.

\begin{proposition} \label{tangent}
Let $A \subset \mathbb{R}^d$ and $(S_k)_k$ be a sequence of similarity self-maps on $\mathbb{R}^d$.  Suppose that
\[
S_k(A) \cap \overline{B}_d(0,1) \to \hat A
\]
where the convergence is in the Hausdorff metric and $\overline{B}_d(0,1)$ is the closed unit ball in $\mathbb{R}^d$.  Then  $\dim_{ \rm A} A \geq \dim_{ \rm A} \hat A$.
\end{proposition}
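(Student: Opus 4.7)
The plan is to show directly that $\dim_{\rm A}\hat A \le s$ whenever $s > \dim_{\rm A} A$, by transporting the Assouad covering bound from $A$ to the similar copies $S_k(A)$ and then passing to the Hausdorff limit. The first preliminary step is to record that Assouad dimension is invariant under similarities: if $S$ is a similarity with ratio $c$, then for any $y\in S(A)$ and $0<r<R$,
\[
N\bigl(B(y,R)\cap S(A),r\bigr) = N\bigl(B(S^{-1}(y),R/c)\cap A,r/c\bigr) \le C_A (R/r)^s,
\]
where $C_A$ is the Assouad constant witnessing $\dim_{\rm A} A < s$. Thus the exact same pair $(C_A,s)$ witnesses $\dim_{\rm A} S_k(A) \le s$ uniformly in $k$.

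Next, fix $x\in\hat A$ and $0<\epsilon<\delta$, and set $\eta=\epsilon/4$. By the Hausdorff convergence $S_k(A)\cap\overline B_d(0,1)\to\hat A$, for all sufficiently large $k$ one has $\dist_{\rm H}(S_k(A)\cap\overline B_d(0,1),\hat A)<\eta$. Pick such a $k$ and let $x_k\in S_k(A)\cap\overline B_d(0,1)$ satisfy $|x_k-x|<\eta$. Then every point of $\hat A\cap B(x,\delta)$ lies within $\eta$ of some point of $S_k(A)$, which in turn (by the triangle inequality, using $|x-x_k|<\eta$) lies in $B(x_k,\delta+2\eta)$. Applying the Assouad bound to $S_k(A)$ at centre $x_k\in S_k(A)$ with outer scale $\delta+2\eta$ and inner scale $\epsilon/2$, we cover $S_k(A)\cap B(x_k,\delta+2\eta)$ by at most $C_A\bigl((\delta+2\eta)/(\epsilon/2)\bigr)^s$ balls of radius $\epsilon/2$; enlarging each to radius $\epsilon/2+\eta=3\epsilon/4<\epsilon$ produces a cover of $\hat A\cap B(x,\delta)$ by balls of radius $\epsilon$.

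Since $\eta<\epsilon/2<\delta$, one has $(\delta+2\eta)/(\epsilon/2) < 3\delta/\epsilon$, so
\[
N\bigl(B(x,\delta)\cap\hat A,\epsilon\bigr) \le C_A\, 3^s\,(\delta/\epsilon)^s,
\]
with the constant $C_A\,3^s$ independent of the chosen $x,\delta,\epsilon$ (and the choice of $k$ having been absorbed). Hence $\dim_{\rm A}\hat A\le s$, and letting $s\searrow\dim_{\rm A} A$ gives the claimed inequality. The only point that requires care is the uniformity of the constant: the similarity-invariance step is what lets us use a single constant $C_A$ for all $S_k(A)$, and the choice of $k$ (depending on $x,\delta,\epsilon$) affects only whether the approximation is good enough, not the size of the final bound. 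I expect this uniformity to be the only mildly subtle point, since everything else is a routine triangle-inequality thickening argument.
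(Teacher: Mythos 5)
Your proof is correct and follows essentially the same route as the paper's: both transport the Assouad covering bound for $A$ through the similarity $S_k$ (you package this as a similarity-invariance lemma, the paper pulls the ball back explicitly via $S_k^{-1}$), then use the Hausdorff approximation and a final enlargement of the covering balls, arriving at a uniform constant ($C_A 3^s$ versus the paper's $C4^s$). No substantive differences.
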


\begin{proof}
Let $s> \dim_{ \rm A} A$ which means there exists $C>0$ such that for all $x \in A$ and all $0<\epsilon'<\delta'$ we have
\[
N(B(x,\delta') \cap A, \epsilon' )  \leq C(\delta'/\epsilon')^s.
\]
Let $c_k>0$ be the (uniform) contraction/expansion rate of $S_k$, i.e. the constant such that
\[
|S_k(x) - S_k(y) | = c_k |x-y |
\]
for all $x,y \in \mathbb{R}^d$.    Let $\hat x \in \hat A$ and fix $0<\epsilon<\delta$. Choose $k$ sufficiently large to ensure that the Hausdorff distance between $S_k(A) \cap \overline{B}_d(0,1) $ and $\hat A$ is strictly less than $\epsilon/2$ and choose $x \in S_k(A) \cap \overline{B}_d(0,1)$ which is strictly closer than $\epsilon/2$ to $\hat x$.  Let $\{U_i\}_i$ be a cover of $B(S_k^{-1}(x),2\delta c_k^{-1}) \cap A$ by fewer than
 \[
 C\left(\frac{2\delta c_k^{-1}}{c_k^{-1} \epsilon/2}\right)^s =    C 4^s (\delta/\epsilon)^s
\]
open balls of radius $c_k^{-1} \epsilon/2$.  It follows that  $\{S_k(U_i)\}_i$ is a cover of $B(x,2\delta ) \cap S_k(A)$ by open balls of radius $\epsilon/2$.  Doubling the radius of each of these balls we obtain a cover of $B(\hat x,\delta ) \cap \hat A$ by open balls of radius $\epsilon$. Therefore
\[
N(B(\hat x,\delta) \cap \hat A, \epsilon )  \leq   C 4^s (\delta/\epsilon)^s
\]
which proves that $ \dim_{ \rm A} \hat A \leq s$ and letting $s \to \dim_{ \rm A} A$ completes the proof.
\end{proof}

Theorem \ref{assouad} now follows immediately from Proposition \ref{tangent} and the fact that the unit ball itself is a tangent to $K$.  Note that, unlike previous work, we zoom out on $K$ to obtain the necessary dimension estimates.

\begin{proposition}
Let $K \subseteq \mathbb{R}^d$ be a half-extended Besicovitch set and let $(S_k)_k$ be defined by $S_k(x) = x/k$.  Then
\[
S_k(K) \cap \overline{B}_d(0,1) \to \overline{B}_d(0,1)
\]
where the convergence is in the Hausdorff metric.
\end{proposition}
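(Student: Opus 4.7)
My plan is to bound the Hausdorff distance $\dist_{\rm H}(S_k(K)\cap\overline{B}_d(0,1),\overline{B}_d(0,1))$ by verifying its two one-sided halves separately. Since $S_k(K)\cap\overline{B}_d(0,1)\subseteq\overline{B}_d(0,1)$, the one-sided distance $\rho(S_k(K)\cap\overline{B}_d(0,1),\overline{B}_d(0,1))$ vanishes identically in $k$, so all of the work lies in showing that every point of $\overline{B}_d(0,1)$ is arbitrarily close to a point of $S_k(K)\cap\overline{B}_d(0,1)$ once $k$ is large.

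The naive approach, given $y\in\overline{B}_d(0,1)\setminus\{0\}$ with unit direction $\omega=y/|y|$, is to take the scaled ray point $|y|\omega+t_\omega/k=y+t_\omega/k$, which lies within $|t_\omega|/k$ of $y$. The hard part is that $\omega\mapsto t_\omega$ is not assumed bounded on $S^{d-1}$, so this per-point error is not controlled uniformly in $y$. I would bypass this difficulty with a standard finite-net argument. Fix $\eta>0$, use compactness of the sphere to choose a finite $(\eta/3)$-net $\{\theta_1,\dots,\theta_N\}\subseteq S^{d-1}$, and set $M=\max_{1\le i\le N}|t_{\theta_i}|$; this is finite because only finitely many translations are being considered.

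For $k\ge 3M/\eta$ and any $y\in\overline{B}_d(0,1)$, my plan is to pick an index $i$ with $|\theta_i-\omega|\le\eta/3$ (or any $i$ if $y=0$), set $\mu=\max(|y|-\eta/3,0)\ge 0$, and consider $p=\mu\theta_i+t_{\theta_i}/k$. Since $\mu\ge 0$, this lies on the scaled half-line $\{\lambda\theta_i+t_{\theta_i}/k:\lambda\ge 0\}\subseteq S_k(K)$. A triangle inequality using $|\mu-|y||\le\eta/3$, $|y|\cdot|\theta_i-\omega|\le\eta/3$, and $|t_{\theta_i}|/k\le\eta/3$ then yields $|p-y|\le\eta$, while the deliberate deficit in $\mu$ ensures $|p|\le\mu+|t_{\theta_i}|/k\le|y|\le 1$, so that $p\in\overline{B}_d(0,1)$.

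This delivers $\rho(\overline{B}_d(0,1),S_k(K)\cap\overline{B}_d(0,1))\le\eta$ for every $k\ge 3M(\eta)/\eta$, and since $\eta>0$ was arbitrary the Hausdorff distance tends to zero. The principal obstacle is precisely the potential blow-up of $|t_\omega|$ as $\omega$ varies on $S^{d-1}$, which kills the one-direction-per-point strategy; the compactness of $S^{d-1}$ together with the finite-net trick is exactly what converts the pointwise convergence of scaled rays into the uniform control needed for Hausdorff convergence.
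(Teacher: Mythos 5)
Your proof is correct and follows essentially the same route as the paper: both arguments exploit a finite net of directions on $S^{d-1}$ so that $\max_i |t_{\theta_i}|$ is finite, then take $k$ large enough that the scaled offsets $t_{\theta_i}/k$ are negligible, giving a finite union of near-radial segments that is Hausdorff-close to $\overline{B}_d(0,1)$. Your version is in fact slightly more careful than the paper's at one point: the radial retraction $\mu=\max(|y|-\eta/3,0)$ explicitly guarantees the approximating point lies in $S_k(K)\cap\overline{B}_d(0,1)$ rather than merely in $S_k(K)$, a detail the paper's proof passes over.
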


\begin{proof}
Let $\Delta_n \subset S^{d-1}$ be a  sequence of finite sets which converge in the Hausdorff metric to $ S^{d-1}$, where $ S^{d-1}$ is metricised with any reasonable metric inherited from the ambient space, such as the spherical metric.  For example one could take $\Delta_n$ to be a maximal $1/n$ separated subset of $S^{d-1}$.  Let
\[
\Delta^*_n = \{ t \theta : \theta \in \Delta_n, \, t \in [0,1]\} \subseteq \overline{B}_d(0,1)
\]
and observe that $\Delta^*_n$ converges to $\overline{B}_d(0,1)$ in the Hausdorff metric.  Let $\epsilon>0$ and choose $n$ sufficiently large to ensure that $\Delta^*_n$ is $\epsilon$-close to $\overline{B}_d(0,1)$ in the Hausdorff metric and also choose $k$ sufficiently large (depending on $n$) to ensure that $\lvert S_k(t_\theta)\rvert <\epsilon / \sqrt{2}$ for all $\theta \in \Delta_n$  (we may do this since $\Delta_n$ is finite and therefore bounded (with a bound that depends on $n$).  Recall that $t_\theta$ is the translation associated to the direction $\theta$ (see (\ref{whatstt})).  Clearly the $\epsilon$-neighbourhood of $S_k(K) \cap \overline{B}_d(0,1)$ contains $\Delta^*_n$ and the $\epsilon$-neighbourhood of $\Delta^*_n$ contains $\overline{B}_d(0,1)$.  This means that the $2\epsilon$-neighbourhood of $S_k(K) \cap \overline{B}_d(0,1)$ contains $\overline{B}_d(0,1)$ and so the Hausdorff distance between $S_k(K) \cap \overline{B}_d(0,1)$ and $\overline{B}_d(0,1)$ is bounded by $2 \epsilon$ completing the proof.
\end{proof}

\section*{Acknowledgments}

JMF was supported by the EPSRC grant EP/J013560/1 when at the University of Warwick and by the Leverhulme Trust Research Fellowship RF-2016-500 when at the University of St Andrews (current).  JCR was supported by the EPSRC Leadership Fellowship EP/G007470/1. This grant also partially supported the sabbatical visit by EJO to Warwick for the academic year 2013/14 when this work began. JCR would like to acknowledge the influence of Miles Caddick's Warwick MMath project \cite{C} (supervised by Jose Rodrigo), which provided a comprehensive introduction to the Kakeya Conjecture and its connections with harmonic analysis.  The authors are grateful to Alex Iosevich for useful discussions and to Jouni Luukkainen for making several helpful comments on the manuscript.  Finally, the authors are very grateful to an anonymous referee for making several  useful comments on the paper including a simplification of the proof of Theorem 4.1 and an improvement of the result, given as Theorem 4.2.

\bibliographystyle{model1-num-names}
\bibliography{<your-bib-database>}

\begin{thebibliography}{00}

\bibitem{B}
 Besicovitch, A. (1928) On Kakeya's problem and a similar one. \emph{Math. Zeits.} {\bf 27}, 312-–320.

\bibitem{Bourgain}
 Bourgain, J. (1991) Besicovitch type maximal operators and applications to Fourier analysis. \emph{Geom. Funct. Anal.} {\bf 1}, 147--187.

 \bibitem{C}
 Caddick, M. (2014) \emph{Multipliers and Bochner--Riesz means}. Warwick MMath project.

\bibitem{D}
Davies, R. (1971) Some remarks on the Kakeya problem. \emph{Proc. Cambridge Philos. Soc.} {\bf 69}, 417--421.

\bibitem{Falc}
Falconer, K. J. (2014) {\em Fractal Geometry: Mathematical Foundations and Applications},
John Wiley, 3rd Ed.

\bibitem{Green}
Green, B. (2013) \textit{Restriction and Kakeya Phenomena}. {\tt http://people.maths.ox.ac.uk/greenbj/papers/rkp.pdf}

\bibitem{How}
Howroyd, J. D. (1996) On Hausdorff and packing dimension of product spaces,
\emph{Math. Proc. Cambridge Philos. Soc.}, {\bf 119},  715--727.

\bibitem{Iosevich}
 Iosevich, A., Morgan,  H. and  Pakianathan, J. (2011)
On directions determined by subsets of vector spaces over finite fields,
\emph{Integers}, {\bf 11}, 9 pp.

\bibitem{KT}
Katz, N.H. \& Tao, T. (2002) New bounds for Kakeya problems. \emph{J. Anal. Math.} {\bf 87}, 231-–263.

\bibitem{KT2}
Katz, N.H. \& Tao, T. (2002) Recent progress on the Kakeya conjecture. \emph{Publ. Mat.}, Proceedings of the 6th International Conference on Harmonic Analysis, 161–179.


\bibitem{XKT} Katz, N.H., {\L}aba, I., \& Tao, T. (2000) An improved bound on the Minkowski dimension of Besicovitch sets in ${\bf R}\sp 3$. \emph{Ann. of Math.} {\bf 152}, 383--446.


\bibitem{keleti}
Keleti, T. (2016)
Are lines much bigger than line segments? \emph{Proc. Amer. Math. Soc.} {\bf  144},  1535--1541.

\bibitem{K} K\"orner, T.W. (2003) Besicovitch via Baire. \emph{Studia Math.} {\bf 158}, 65--78.


\bibitem{mackaytyson}
Mackay, J. M. and Tyson, J. T. (2010)
\emph{Conformal dimension. Theory and application},
University Lecture Series, 54. American Mathematical Society, Providence, RI.

\bibitem{mattila}
Mattila, P. (1995) \textit{Geometry of sets and measures in Euclidean spaces}, Cambridge Studies in Advanced Mathematics No.~44, Cambridge University Press.

\bibitem{Oxtoby} Oxtoby, J.C. (1980) {\it Measure and category}, second edition,  Springer-Verlag, New York-Berlin.


\bibitem{JCR} Robinson, J.C. (2011) \emph{Dimensions, Embeddings, and Attractors.} Cambridge University Press, Cambridge, UK.

\bibitem{W}
Wolff, T. (1995). An improved bound for Kakeya type maximal functions. \emph{Rev. Mat. Iberoamericana} {\bf 11}, 651-–674.

\end{thebibliography}

\end{document}